\title{Squared-Norm Empirical Process in Banach Space\footnote{This note was first written in 2012 and a version has been available on the first author's website.  It is now posted on arXiv to provide a universal identifier for citations elsewhere.  It is not intended for publication in its current form, but a revision incorporating later work will be posted separately.}
\footnote{This research was supported by NSF grants DMS-0903120 and BCS-0941518.}}
\author{
Vincent Q.~Vu \\
Department of Statistics \\
The Ohio State University \\
Columbus, OH \\
\texttt{vqv@stat.osu.edu} \\
\and
Jing Lei \\
Department of Statistics \\
Carnegie Mellon University \\
Pittsburgh, PA \\
\texttt{jinglei@andrew.cmu.edu}
}
\date{}
\begin{document}

\maketitle
\begin{abstract}
This note extends a recent result of Mendelson on the supremum of a quadratic process 
to squared norms of functions taking values in a Banach space.  Our method of proof is a 
reduction by a symmetrization argument and observation about the subadditivity of the 
generic chaining functional.  We provide an application to the supremum of 
a linear process in the sample covariance matrix indexed by finite rank, 
positive definite matrices.
\end{abstract}

\section{Introduction}
\label{sec:introduction}
Let $F$ be a class of $\Real$-valued functions on the probability space $(\Omega, \prob)$ and 
$\{X_1,\ldots,X_n\}$ be independent, identically distributed random variables. 
Let $\norm{\cdot}_{\psi_\alpha}$ be the Orlicz $\psi_\alpha$ norm for $\alpha\ge 1$
and $\gamma_{\alpha}$ be Talagrand's generic chaining complexity of $F$ under
the $\psi_{\alpha}$ metric.
\citeauthor{Mendelson:2010} has proved the following theorem:
\begin{theorem}[{\textcite{Mendelson:2010}}]
  \label{thm:mendelson}
  If $F$ is a symmetric class of mean-zero functions on $(\Omega, \prob)$ then 
  there exists absolute constants $c_1$, $c_2$, and $c_3$ such that
  for any $t\ge c_1$, with probability at least $1-2\exp(-c_2 t^{2/5})$
  \begin{equation*}
    \sup_{f \in F}
    \Biggr\lvert
      \frac{1}{n} \sum_{i=1}^n f^2(X_i) - \E f^2(X_i)
    \Biggr\rvert
    \leq c_3 t
    \Biggr\{
      \frac{d_{\psi_1}(F) 
      \gamma_2(F, \psi_2)}{\sqrt{n}}
      +
      \frac{\gamma_2^2(F, \psi_2)}{n}
    \Biggr\}
    \,,
  \end{equation*}
  where $d_{\psi_1}(F) = \sup_{f\in F} \norm{f(X_1)}_{\psi_1}$.
\end{theorem}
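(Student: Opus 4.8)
The plan is to reduce the centered quadratic process to a symmetric multiplier process and then to control it by generic chaining, working conditionally on the sample. First I would symmetrize. Using that each $f\in F$ is mean zero, together with independence and the symmetry of the class, a standard Rademacher symmetrization with independent signs $\epsilon_1,\dots,\epsilon_n$ (with $\E_\epsilon$ the expectation over the signs) gives
\[
  \E \sup_{f\in F}\Bigl\lvert \frac1n\sum_{i=1}^n \bigl(f^2(X_i)-\E f^2(X_i)\bigr)\Bigr\rvert
  \;\le\; \frac{2}{n}\,\E\,\E_\epsilon \sup_{f\in F}\Bigl\lvert \sum_{i=1}^n \epsilon_i f^2(X_i)\Bigr\rvert,
\]
so it suffices to bound the symmetrized process, and from here I would condition on $X_1,\dots,X_n$.

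Next I would exploit the quadratic structure. Writing $f^2-g^2=(f-g)(f+g)$, the increment of the conditional process is the Rademacher sum $\sum_i \epsilon_i (f-g)(f+g)(X_i)$, which by Hoeffding's inequality is sub-Gaussian with respect to the random metric
\[
  d_X(f,g) \;=\; \Bigl(\sum_{i=1}^n (f-g)^2(f+g)^2(X_i)\Bigr)^{1/2}.
\]
Talagrand's generic chaining then bounds $\E_\epsilon \sup_{f}\lvert\sum_i \epsilon_i f^2(X_i)\rvert$ by an absolute constant times $\gamma_2(F,d_X)$, so everything reduces to controlling the chaining functional of the random metric $d_X$.

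The heart of the argument is to bound $\gamma_2(F,d_X)$ by the two deterministic quantities in the statement. Factoring the product as
\[
  d_X(f,g) \;\le\; \Bigl(\max_{1\le i\le n} \lvert (f+g)(X_i)\rvert\Bigr)\Bigl(\sum_{i=1}^n (f-g)^2(X_i)\Bigr)^{1/2}
\]
separates a multiplier from an increment, but the multiplier still depends on $f,g$, so the two factors cannot simply be pulled apart. Here I would use the subadditivity of $\gamma_2$ link by link along the chain: writing $(f+g)=(\pi_k f-\pi_{k-1}f)+2\pi_{k-1}f$ splits the sup-norm factor into a small increment part, which combines with the empirical $\ell_2$ increment to produce the self-term of order $\gamma_2^2(F,\psi_2)/n$, and a diameter part controlled uniformly by $d_{\psi_1}(F)$. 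After comparing the empirical $\ell_2$ increments to the population $\psi_2$ metric and accounting for the $1/n$ normalization, the empirical chaining functional becomes $\gamma_2(F,\psi_2)/\sqrt n$, and one obtains in expectation the two terms $d_{\psi_1}(F)\gamma_2(F,\psi_2)/\sqrt n$ and $\gamma_2^2(F,\psi_2)/n$.

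Finally, to upgrade the expectation bound to the stated tail I would invoke a high-probability form of the generic chaining together with concentration of the random metric $d_X$ around its deterministic surrogate. The main obstacle, and the source of the unusual $t^{2/5}$ exponent, is precisely this coupling: the multiplier $\max_i\lvert(f+g)(X_i)\rvert$ is random and only sub-exponential ($\psi_1$), while the increments are sub-Gaussian ($\psi_2$), so the deviation inequality obtained at each scale mixes the two tail regimes, and optimizing the resulting bound across the chaining scales yields the fractional power. Controlling $\gamma_2(F,d_X)$ uniformly despite this coupling, and in particular doing so without paying a logarithmic factor, is where the real work lies; the symmetrization and the bare chaining bound are by comparison routine.
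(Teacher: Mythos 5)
The first thing to note is that the paper does not prove this statement at all: it is quoted verbatim from Mendelson (2010) and used as a black box in the proof of the paper's own Theorem~2. So your proposal can only be judged as a free-standing reconstruction of Mendelson's argument, and judged that way it has a genuine gap.

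The reduction you set up is correct but is, by your own admission, the routine part: symmetrization to the signed process $\sum_i \epsilon_i f^2(X_i)$, the observation that conditionally on the sample this process is sub-Gaussian (via Hoeffding) with respect to the empirical metric $d_X(f,g) = \bigl(\sum_{i=1}^n (f-g)^2(f+g)^2(X_i)\bigr)^{1/2}$, and the chaining bound by $\gamma_2(F, d_X)$. The entire content of the theorem is the step you describe but do not execute: bounding $\gamma_2(F,d_X)$ \emph{uniformly over the randomness of the sample, with the stated tail}, by $d_{\psi_1}(F)\,\gamma_2(F,\psi_2)\sqrt{n} + \gamma_2^2(F,\psi_2)$. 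Your sketch (split $f+g$ link by link along the chain, compare empirical increments to $\psi_2$ increments, absorb the diameter into $d_{\psi_1}(F)$) names the intended shape of the bound, not an argument for it: comparing $\bigl(\sum_i (f-g)^2(X_i)\bigr)^{1/2}$ to $\sqrt{n}\,\norm{f-g}_{\psi_2}$ simultaneously along all links of a near-optimal admissible sequence is itself an empirical-process claim of essentially the same depth as the theorem, and controlling the $\psi_1$-type multipliers $\max_i \lvert (f+g)(X_i)\rvert$ at every scale without incurring a $\log n$ factor is exactly what requires Mendelson's machinery (Bernstein-type estimates at each link, truncation of the chain at a level depending on $t$ and $n$, and the optimization over that level which is what produces the exponent $2/5$). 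Finally, your route to the tail bound does not work as stated: you first take expectation over the signs (symmetrization in expectation) and then propose to ``upgrade'' to a deviation bound via a high-probability chaining theorem plus ``concentration of the random metric $d_X$.'' A deviation bound cannot be recovered after the expectation has been taken, and uniform concentration of the random geometry $d_X$ is not an available tool --- it is another name for the problem being solved. The whole argument has to be run as a tail estimate from the start, which is how the probability $1 - 2\exp(-c_2 t^{2/5})$, with its unusual exponent, actually arises.
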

The goal of this note is to extend the preceding theorem to a class $G$ 
of functions that take values in a Banach space $(B, \norm{\cdot})$.  
We wish to bound
\begin{equation}
  \label{eq:multivariate-sup}
   \sup_{g \in G}
    \Biggr\lvert
      \frac{1}{n} \sum_{i=1}^n \norm{g(X_i)}^2 - \E \norm{g(X_i)}^2
    \Biggr\rvert
\end{equation}
with a similar exponential tail probability bound.
 A special case is $g(X_1)=A^T X_1$, where $X_1$ is a random vector
in $\Real^p$ and $A\in \mathcal A$, a class of $p\times k$ matrices.
In this note we will assume that $G$ is countable to avoid unnecessary measurability issues.

When $(B,\norm{\cdot}) = (\Real, \lvert\cdot\rvert)$, this expectation reduces to the one addressed by 
\Cref{thm:mendelson}. Our goal is to bound the expectation in terms 
of complexity parameters of the class $G$.  We will use a symmetrization argument to 
put \eqref{eq:multivariate-sup} into a form suitable for \Cref{thm:mendelson}.
The next section contains an abstract technical result concerning subadditivity of 
the generic chaining $\gamma_\alpha$ functional \parencite{Talagrand} that enables 
our reduction.

\section{The Generic Chaining}
\label{sec:chaining}
\begin{definition}
  Given a metric space $(T,d)$ and $A$ a subset of $T$, define the \emph{diameter} 
  of $A$ to be
  \begin{equation}
    \Delta(A) \coloneqq \sup_{t,u \in A} d(t,u)\,.
  \end{equation}
\end{definition}
\begin{definition}
  Given a metric space $(T,d)$ and $\mathcal{A}$ a partition of $T$,
  for $t \in T$, define $\mathcal{A}(t)$ to be the unique element of $\mathcal{A}$ 
  that contains $t$.
\end{definition}

\begin{definition}[{\textcite{Talagrand}}]
  Given a metric space $(T,d)$,
  an \emph{admissible sequence} is an increasing sequence 
    $(\mathcal{A}_s)_{s \geq 0}$ of partitions of $T$ such that $\card{\mathcal{A}_0} = 1$ 
    and, for $s \geq 1$, $\card{\mathcal{A}_s} \leq 2^{2^s}$.
  For $\alpha \geq 1$, define the $\gamma_\alpha$ functional by 
  \begin{equation*}
    \gamma_\alpha(T, d) 
    \coloneqq \inf \sup_{t \in T} \sum_{s \geq 0} 2^{s/\alpha} \Delta(\mathcal{A}_s(t))
    \,,
  \end{equation*}
  where the infimum is taken over all admissible sequences.
\end{definition}

\begin{lemma}
  \label{lem:generic-chaining-subadditivity}
  If $(T_1, d)$ and $(T_2, d)$ are metric spaces, then
  \begin{equation*}
    \gamma_\alpha(T_1 \cup T_2, d) 
    \leq 
    3 \big[ \Delta(T_1 \cup T_2) + \gamma_\alpha(T_1, d) + \gamma_\alpha(T_2, d) \big]
    \,.
  \end{equation*}
  Moreover, if $T_1 \cap T_2 \neq \emptyset$, then
  \begin{equation*}
      \gamma_\alpha(T_1 \cup T_2, d) 
    \leq 
      9 \big[ \gamma_\alpha(T_1, d) + \gamma_\alpha(T_2, d) \big]
    \,.
  \end{equation*}
\end{lemma}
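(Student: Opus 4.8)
The plan is to manufacture a near-optimal admissible sequence on $T_1 \cup T_2$ from near-optimal admissible sequences on $T_1$ and $T_2$ separately, spending a single index shift to respect the cardinality constraint $\card{\mathcal{A}_s} \le 2^{2^s}$.

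First I would reduce to the disjoint case. The functional $\gamma_\alpha$ is monotone under inclusion: if $U \subseteq T$, then restricting any admissible sequence of $T$ to $U$ (intersecting every partition cell with $U$ and discarding the empty ones) produces an admissible sequence of $U$ whose value is no larger, since intersecting with $U$ only shrinks diameters and never increases cardinalities. Consequently, replacing $T_2$ by $T_2 \setminus T_1$ leaves both $T_1 \cup T_2$ and its diameter unchanged while not increasing $\gamma_\alpha(T_2)$, so it suffices to prove the first inequality when $T_1 \cap T_2 = \emptyset$.

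Next I would fix $\epsilon > 0$, choose admissible sequences $(\mathcal{A}^1_s)$ and $(\mathcal{A}^2_s)$ on $T_1$ and $T_2$ whose associated sums lie within a factor $1+\epsilon$ of $\gamma_\alpha(T_1)$ and $\gamma_\alpha(T_2)$, and define a sequence on $T = T_1 \cup T_2$ by $\mathcal{B}_0 = \{T\}$ and $\mathcal{B}_s = \mathcal{A}^1_{s-1} \cup \mathcal{A}^2_{s-1}$ for $s \ge 1$. Since $T_1$ and $T_2$ are now disjoint, each $\mathcal{B}_s$ is a genuine partition of $T$, and it refines $\mathcal{B}_{s-1}$ because the two input sequences are increasing. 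The crux is the cardinality bound $\card{\mathcal{B}_s} \le 2\cdot 2^{2^{s-1}} = 2^{2^{s-1}+1} \le 2^{2^s}$ for $s \ge 1$: the one-step shift exactly absorbs the doubling caused by taking a union, making $(\mathcal{B}_s)$ admissible. I expect this cardinality accounting to be the only real obstacle; the rest is bookkeeping.

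Finally I would evaluate the functional. For $t \in T_1$ (the case $t \in T_2$ being symmetric) one has $\mathcal{B}_0(t) = T$ and $\mathcal{B}_s(t) = \mathcal{A}^1_{s-1}(t)$ for $s \ge 1$, whence
$$\sum_{s \ge 0} 2^{s/\alpha}\Delta(\mathcal{B}_s(t)) = \Delta(T) + 2^{1/\alpha}\sum_{r \ge 0} 2^{r/\alpha}\Delta(\mathcal{A}^1_r(t)) \le \Delta(T) + 2\sum_{r \ge 0} 2^{r/\alpha}\Delta(\mathcal{A}^1_r(t)),$$
using $2^{1/\alpha} \le 2$ since $\alpha \ge 1$. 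Taking the supremum over $t \in T$ and bounding the larger of the two branches by their sum gives $\gamma_\alpha(T) \le \Delta(T) + 2(1+\epsilon)[\gamma_\alpha(T_1)+\gamma_\alpha(T_2)]$, and letting $\epsilon \downarrow 0$ yields the first inequality with room to spare in the constant. For the second inequality I would pick a common point $x_0 \in T_1 \cap T_2$; the triangle inequality through $x_0$ gives $\Delta(T_1 \cup T_2) \le \Delta(T_1) + \Delta(T_2)$, and because the $s = 0$ term already forces $\Delta(T_i) \le \gamma_\alpha(T_i)$, substituting into the first inequality collapses everything into a constant multiple of $\gamma_\alpha(T_1) + \gamma_\alpha(T_2)$.
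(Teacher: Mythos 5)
Your proof is correct, and although it shares the paper's core idea---splicing near-optimal admissible sequences for $T_1$ and $T_2$ into one for the union, shifting the partition index to absorb the cardinality constraint---the decomposition is genuinely different. The paper never reduces to disjoint sets: at level $s \ge 2$ it takes the cells $A \cap (T_1 \setminus T_2)$, $B \cap (T_2 \setminus T_1)$, and the common refinement $A \cap B$ for $A \in \mathcal{A}_{s-2}$, $B \in \mathcal{B}_{s-2}$; the product term $\lvert\mathcal{A}_{s-2}\rvert \cdot \lvert\mathcal{B}_{s-2}\rvert$ in the cardinality count is what forces its two-step shift. You instead prove monotonicity of $\gamma_\alpha$ under inclusion (by restricting partitions to the subset), replace $T_2$ by $T_2 \setminus T_1$, and then a plain union of partitions with a one-step shift suffices. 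Your route costs one extra (standard, correctly sketched) lemma but buys sharper constants: $\gamma_\alpha(T_1 \cup T_2) \le \Delta(T_1 \cup T_2) + 2\left[\gamma_\alpha(T_1) + \gamma_\alpha(T_2)\right]$, and, via $\Delta(T_i) \le \gamma_\alpha(T_i)$ together with your observation $\Delta(T_1 \cup T_2) \le \Delta(T_1) + \Delta(T_2)$ (itself sharper than the paper's factor-$2$ version), a constant $3$ rather than $9$ in the ``moreover'' part. A further point in your favor: you correctly carry the re-indexing factor $2^{1/\alpha} \le 2$, whereas the paper's display bounds $\sum_{s \ge 2} 2^{s/\alpha} \Delta(\mathcal{C}_s(t))$ by $\sum_{s \ge 0} 2^{s/\alpha} \Delta(\mathcal{A}_s(t)) + \sum_{s \ge 0} 2^{s/\alpha} \Delta(\mathcal{B}_s(t))$, silently dropping the factor $2^{2/\alpha}$ produced by its two-step shift; with that factor restored the paper's argument yields constants like $8$ and $14$ rather than the stated $3$ and $9$, so your bookkeeping is what actually delivers the constants in the lemma as stated. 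Two trivial loose ends you could add in a final write-up: if $T_2 \setminus T_1 = \emptyset$ the inequality is immediate since $T_1 \cup T_2 = T_1$, and the $(1+\epsilon)$-multiplicative choice of near-optimal sequences is better phrased additively (value at most $\gamma_\alpha(T_i) + \epsilon$) to cover the degenerate case $\gamma_\alpha(T_i) = 0$.
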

\begin{proof}
  Let $(\mathcal{A}_s)_{s \geq 0}$ be an admissible sequence for $T_1$ 
  such that
  \begin{equation}
    \label{eq:admissible-1}
    \sum_{s \geq 0} 2^{s/\alpha} \Delta(\mathcal{A}_s(t)) \leq 2 \gamma_\alpha(T_1, d) 
    \text{ for all $t \in T_1$},\,
  \end{equation}
  and $(\mathcal{B}_s)_{s \geq 0}$ an admissible sequence for $T_2$ 
  such that 
  \begin{equation}
    \label{eq:admissible-2}
    \sum_{s \geq 0} 2^{s/\alpha} \Delta(\mathcal{B}_s(t)) \leq 2 \gamma_\alpha(T_2, d) 
    \text{ for all $t \in T_2$}\,.
  \end{equation}
  We define partitions $\mathcal{C}_s$ of $T_1 \cup T_2$ as follows.  
  Let $\mathcal{C}_0 = \{ T_1 \cup T_2 \}$, 
  $\mathcal{C}_1 = \mathcal{C}_0$, 
  and, for $s \geq 2$, let $\mathcal{C}_s$ be the partition consisting of the 
  sets
  \begin{equation*}
    \label{eq:partition}
    \begin{gathered}
      A \cap (T_1 \setminus T_2) \text{ for $A \in \mathcal{A}_{s-2}$} \,, \\
      B \cap (T_2 \setminus T_1) \text{ for $B \in \mathcal{B}_{s-2}$} \,, \text{ and } \\
      A \cap B \text{ for $A \in \mathcal{A}_{s-2}$ and $B \in \mathcal{B}_{s-2}$} \,.
    \end{gathered}
  \end{equation*}
  It is straightforward to check that $(\mathcal{C}_s)_{s \geq 0}$ is an increasing 
  sequence of partitions of $T_1 \cup T_2$, 
  and that $\card{C_0} = \card{C_1} = 1$ and, for $s \geq 2$, 
  \begin{align*}
      \card{\mathcal{C}_s} 
    &\leq 
      \card{\mathcal{A}_{s-2}} + \card{\mathcal{B}_{s-2}}
      + \card{\mathcal{A}_{s-2}} \times \card{\mathcal{B}_{s-2}}
    \\
    &\leq
      2^{2^{s-2}} + 2^{2^{s-2}} + 2^{2^{s-1}}
    \leq
      2^{2^s}
    \,.
  \end{align*}
  Thus $(\mathcal{C}_s)_{s \geq 0}$ is an admissible sequence for $T_1 \cup T_2$.
  Let $t \in T_1 \cup T_2$.
  Since, for $s \geq 2$, every element of $\mathcal{C}_s$ is a subset of an element of 
  $\mathcal{A}_{s-2}$ and/or $\mathcal{B}_{s-2}$,
  \begin{align*}
      \sum_{s \geq 0} 2^{s/\alpha} \Delta(\mathcal{C}_s(t))
    &\leq
      3 \Delta(T_1 \cup T_2) 
      + \sum_{s \geq 2} 2^{s/\alpha} \Delta(\mathcal{C}_s(t))
    \\
    &\leq
      3 \Delta(T_1 \cup T_2)
      + \sum_{s \geq 0} 2^{s/\alpha} \Delta(\mathcal{A}_s(t))
      + \sum_{s \geq 0} 2^{s/\alpha} \Delta(\mathcal{B}_s(t))
    \\
    &\leq
      3
      \big[
        \Delta(T_1 \cup T_2) + \gamma_\alpha(T_1, d) + \gamma_\alpha(T_2, d)
      \big]
    \,,
  \end{align*}
  by \eqref{eq:admissible-1} and \eqref{eq:admissible-2}.
  Then 
  \begin{align*}
      \gamma_\alpha(T_1 \cup T_2, d)
    &\leq
      \sup_{t \in T_1 \cup T_2}
      \sum_{s \geq 0} 2^{s/\alpha} \Delta(\mathcal{C}_s(t))
    \\
    &\leq
      3 \big[
        \Delta(T_1 \cup T_2) + \gamma_\alpha(T_1, d) + \gamma_\alpha(T_2, d)
      \big]  
    \,.
  \end{align*}
  For the ``moreover'' part, if $u \in T_1 \cap T_2$, then by the triangle inequality
  \begin{align*}
      \Delta(T_1 \cup T_2)
    &= 
      \sup_{t_1, t_2 \in T_1 \cup T_2} d(t_1, t_2)
    \\
    &\leq 
      \sup_{t_1, t_2 \in T_1 \cup T_2} d(t_1, u) + d(t_2, u)
    \\
    &\leq
     2 \big[ \Delta(T_1) + \Delta(T_2) \big]
    \\
    &\leq
      2\big[ \gamma_2(T_1, d) + \gamma_2(T_2, d) \big]
    \,.
    \qedhere
  \end{align*}
\end{proof}

\section{Symmetrization}
\label{sec:symmetrization}
The following theorem is our main result.  Its proof is a symmetrization 
argument followed by an application of \Cref{thm:mendelson} to and 
\Cref{lem:generic-chaining-subadditivity}.
\begin{theorem}
  \label{thm:multivariate-mendelson}
  If $X_1, X_2, \ldots, X_n$ are independent, identically distributed random variables, 
  and $G$ is a symmetric class of functions taking values in a Banach space $(B, \norm{\cdot})$, 
  then there exists absolute constants $c_1$, $c_2$, and $c_3$ such that
  for all $t\ge c_1$, with probability at least $1-2\exp(-c_2t^{2/5})$,
  \begin{equation}
      \sup_{g \in G}
      \Biggr\lvert
        \frac{1}{n} \sum_{i=1}^n \norm{g(X_i)}^2 - \E \norm{g(X_i)}^2
      \Biggr\rvert
    \leq
      c_3t
      \Biggr\{
        \frac{
          \sup_{g \in G} \norm{\norm{g(X_1)}}_{\psi_1} 
          \gamma_2(G, d)
        }{\sqrt{n}}
      +
        \frac{
          \gamma_2^2(G, d)
        }{n}      
      \Biggr\}
      \,,    
  \end{equation}
  where $d$ is the metric on $G$ defined by 
  \begin{equation}
    d(g_1, g_2) = \bignorm{ \norm{g_1(X_1) - g_2(X_1)} }_{\psi_2}
    \,.
  \end{equation}
\end{theorem}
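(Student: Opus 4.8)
The plan is to reduce to \Cref{thm:mendelson} by passing to the real-valued functions $f_g \coloneqq \norm{g(\cdot)}$ and centering them. Write $\mu_g \coloneqq \E\norm{g(X_1)}$ and $\bar f_g \coloneqq \norm{g(\cdot)} - \mu_g$, so that $\E \bar f_g(X_1) = 0$. Expanding the square gives the pointwise identity $\norm{g(X_i)}^2 = \bar f_g^2(X_i) + 2\mu_g \bar f_g(X_i) + \mu_g^2$, and since the constant $\mu_g^2$ cancels under centering,
\[
  \frac1n\sum_{i=1}^n \norm{g(X_i)}^2 - \E\norm{g(X_1)}^2
  = \Bigl(\frac1n\sum_i \bar f_g^2(X_i) - \E\bar f_g^2(X_1)\Bigr) + 2\mu_g\cdot\frac1n\sum_i\bar f_g(X_i).
\]
Taking $\sup_g$ and the triangle inequality splits the target into a \emph{quadratic} term $\mathrm{(I)} \coloneqq \sup_g\lvert\tfrac1n\sum_i\bar f_g^2(X_i) - \E\bar f_g^2\rvert$ and a \emph{linear} term $\mathrm{(II)} \coloneqq 2\sup_g\mu_g\lvert\tfrac1n\sum_i\bar f_g(X_i)\rvert$.

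For term $\mathrm{(I)}$ I would apply \Cref{thm:mendelson} to the reflected class $F \coloneqq \bar F \cup (-\bar F)$, where $\bar F \coloneqq \{\bar f_g : g \in G\}$. This class is countable, mean-zero, and symmetric by construction, and since $f^2 = (-f)^2$ we have $\sup_{f\in F}\lvert\tfrac1n\sum_i f^2(X_i) - \E f^2\rvert = \mathrm{(I)}$. It remains to bound the two complexity parameters. The centering inequality for the $\psi_1$ norm gives $d_{\psi_1}(F) = \sup_g\norm{\bar f_g}_{\psi_1} \le C\sup_g\bignorm{\norm{g(X_1)}}_{\psi_1}$. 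For the chaining term, the reverse triangle inequality $\lvert\norm{a}-\norm{b}\rvert\le\norm{a-b}$ together with the centering inequality for $\psi_2$ shows that $g\mapsto\bar f_g$ is Lipschitz from $(G,d)$ into $(\bar F, \psi_2)$ up to an absolute constant, so $\gamma_2(\bar F,\psi_2)\le C\gamma_2(G,d)$ by the contraction property of the generic chaining functional. Applying \Cref{lem:generic-chaining-subadditivity} to $F = \bar F\cup(-\bar F)$ and using $\gamma_2(-\bar F,\psi_2)=\gamma_2(\bar F,\psi_2)$ then bounds $\gamma_2(F,\psi_2)$ by a constant multiple of $\Delta(F,\psi_2)+\gamma_2(G,d)$.

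The delicate point, and the main obstacle, is to control the diameter $\Delta(F,\psi_2)$ and the individual norms $\norm{\bar f_g}_{\psi_2}$: these involve \emph{sums} $\norm{\bar f_{g_1}+\bar f_{g_2}}_{\psi_2}$, whose $\psi_2$ size is not seen by the increment metric $d$ and is not controlled by the $\psi_1$ assumption alone. This is exactly where the symmetry of $G$ enters. Because $-g\in G$ whenever $g\in G$, taking the pair $(g,-g)$ in the definition of the diameter gives
\[
  \gamma_2(G,d)\ge\Delta(G,d)\ge d(g,-g)=\bignorm{\norm{2g(X_1)}}_{\psi_2}=2\bignorm{\norm{g(X_1)}}_{\psi_2}
\]
for every $g$, where $\gamma_2(G,d)\ge\Delta(G,d)$ holds because $\card{\mathcal A_0}=1$ forces $\mathcal A_0=\{G\}$ and hence the $s=0$ term of the chaining sum to equal $\Delta(G,d)$. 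Thus $\sup_g\bignorm{\norm{g(X_1)}}_{\psi_2}\le \tfrac12\gamma_2(G,d)$, and a triangle inequality bounds $\Delta(F,\psi_2)$ and every $\norm{\bar f_g}_{\psi_2}$ by a constant multiple of $\gamma_2(G,d)$. Substituting $d_{\psi_1}(F)\le C\sup_g\bignorm{\norm{g(X_1)}}_{\psi_1}$ and $\gamma_2(F,\psi_2)\le C\gamma_2(G,d)$ into \Cref{thm:mendelson} yields the desired bound for $\mathrm{(I)}$ with probability at least $1-2\exp(-c_2 t^{2/5})$.

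For the linear term $\mathrm{(II)}$, I would use $\mu_g\le C\bignorm{\norm{g(X_1)}}_{\psi_1}\le C\sup_g\bignorm{\norm{g(X_1)}}_{\psi_1}$ and control $\sup_g\lvert\tfrac1n\sum_i\bar f_g(X_i)\rvert$ by the generic chaining tail bound for an empirical process whose increments are sub-Gaussian at scale $n^{-1/2}d(g_1,g_2)$; this produces a supremum of order $\gamma_2(G,d)/\sqrt n$ plus a lower-order term, with an exponential tail far sharper than $\exp(-c_2 t^{2/5})$. Multiplying by $\sup_g\bignorm{\norm{g(X_1)}}_{\psi_1}$ gives a contribution dominated by the first term on the right-hand side; the anchoring term $\lvert\tfrac1n\sum_i\bar f_{g_0}(X_i)\rvert$ at a base point $g_0$ is likewise harmless because $\sup_g\bignorm{\norm{g(X_1)}}_{\psi_1}\le C\sup_g\bignorm{\norm{g(X_1)}}_{\psi_2}\le C\gamma_2(G,d)$, once more by the symmetry of $G$. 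A union bound over $\mathrm{(I)}$ and $\mathrm{(II)}$, whose combined failure probability is dominated by Mendelson's $2\exp(-c_2 t^{2/5})$, then completes the argument after adjusting the absolute constants.
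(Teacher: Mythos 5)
Your proposal is correct, but it takes a genuinely different route from the paper's proof. You center the real-valued functions $\norm{g(\cdot)}$ at their means and split the squared empirical process into a quadratic term and a linear cross term; the paper instead performs a Rademacher symmetrization, defining $f(x,\epsilon)=\epsilon\norm{g(x)}$ on the product probability space and setting $F_0 = F \cup -F$. Since $\E[\epsilon\norm{g(X_1)}]=0$ and $f^2(X_i,\epsilon_i)=\norm{g(X_i)}^2$ pointwise, the class $F_0$ is mean-zero and symmetric with no centering at all, and the reduction to \Cref{thm:mendelson} is an exact identity: the linear term never appears. After that, the paper's only remaining task is the complexity bound $\gamma_2(F_0,\psi_2)\le C\gamma_2(G,d)$, which it proves with precisely the ingredients you use for your term $\mathrm{(I)}$: \Cref{lem:generic-chaining-subadditivity}, Talagrand's Theorem 1.3.6 applied to the contraction $g\mapsto f$ (via the reverse triangle inequality), and the symmetry of $G$ through $2\norm{\norm{g(X_1)}}_{\psi_2}=d(g,-g)\le\Delta(G,d)\le\gamma_2(G,d)$; your key observation on this last point coincides exactly with the paper's, and your extra centering constants are harmless. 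The difference in cost is your term $\mathrm{(II)}$: it requires a sub-Gaussian chaining deviation inequality for $\sup_g\bigl\lvert n^{-1}\sum_i\bigl(\norm{g(X_i)}-\E\norm{g(X_1)}\bigr)\bigr\rvert$ run at a $t$-dependent deviation level (so that the final bound keeps the form $c_3 t\{\cdots\}$), control of the anchoring term at a base point, and a union bound with adjusted constants --- all standard and fillable, but only sketched in your write-up, whereas the paper's trick eliminates this term before it arises. What your route buys is that it stays on the original probability space with a plain centering argument, arguably the more natural first thing to try; the paper's symmetrization is shorter and cleaner because the identity $(\epsilon\norm{g(x)})^2=\norm{g(x)}^2$ kills the cross term for free.
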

\begin{proof}
Let $\{\epsilon_1,\ldots,\epsilon_n\}$ 
be independent, identically distributed Rademacher random variables that are 
indepedent of $\{X_1,\ldots,X_n\}$. Define the function class
\begin{equation*}
  F = \{ f : f(x, \epsilon) = \epsilon \norm{g(x)} \text{ for some } g \in G \}
  \,,
\end{equation*}
and let 
\begin{equation*}
  F_0 = F \cup - F\,.
\end{equation*}
$F_0$ is a symmetric class of mean-zero functions by construction. 
So \Cref{thm:mendelson} implies that, for $t\ge c_1$, with
probability at least $1-2\exp(-c_2t^{2/5})$,
\begin{align*}
  &
    \sup_{g \in G}
    \Biggr\lvert
      \frac{1}{n} \sum_{i=1}^n \norm{g(X_i)}^2 - \E \norm{g(X_i)}^2
    \Biggr\rvert
  \\
    \quad =
  &
    \sup_{f \in F_0}
    \Biggr\lvert
      \frac{1}{n} \sum_{i=1}^n f^2(X_i, \epsilon_i) - \E f^2(X_i, \epsilon_i)
    \Biggr\rvert
  \\
    \quad \leq
  &
    c_3t\Biggr\{
      \frac{\sup_{g \in G} \bignorm{ \norm{g(X_1)} }_{\psi_1} \gamma_2(F_0, \psi_2)}{\sqrt{n}}
    +
      \frac{\gamma_2^2(F_0, \psi_2)}{n}
    \Biggr\}
  \,.
\end{align*}
The rest of the proof deals with showing that 
\begin{equation}
  \label{eq:main-contraction-bound}
  \gamma_2(F_0, \psi_2) \leq C \gamma_2(G, d)
  \,.
\end{equation}
Apply \Cref{lem:generic-chaining-subadditivity} to
\begin{align}
    \gamma_2(F_0, \psi_2)
  \notag
  &\leq
    3\big[
      \Delta(F \cup - F) + \gamma_2(F, \psi_2) + \gamma_2(-F, \psi_2)
    \big]
  \\
  \label{eq:subadditive-bound}
  &=
    3\big[
      \Delta(F \cup - F) + 2 \gamma_2(F, \psi_2)
    \big]
  \,.
\end{align}
For any $f_1,f_2 \in F$ with corresponding $g_1, g_2 \in G$, 
we have by the triangle inequality that
\begin{align*}
    \bignorm{f_1(X_1,\epsilon_1) - f_2(X_1,\epsilon_1) }_{\psi_2}
  &= 
    \bignorm{\norm{g_1(X_1)} - \norm{g_2(X_1)} }_{\psi_2} 
  \\
  &\leq 
    \norm{ \norm{g_1(X_1) - g_2(X_1)} }_{\psi_2}
  \\
  &\eqqcolon
    d(g_1, g_2)
  \,.
\end{align*}
Then by Theorem 1.3.6 of \textcite{Talagrand},
\begin{equation}
  \label{eq:contraction-bound}
  \gamma_2(F, \psi_2) \leq C \gamma_2(G, d)
\end{equation}
for an absolute constant $C > 0$.
Using the triangle inequality and the symmetry of $G$, 
\begin{align*}
    \Delta(F \cup - F)
  &=
    \max\Big\{ 
      \sup_{g_1,g_2 \in G}
      \bignorm{ \norm{ g_1(X_1) } - \norm{ g_2(X_1) } }_{\psi_2} 
      \,,\,
      \sup_{g_1,g_2 \in G}
      \bignorm{ \norm{ g_1(X_1) } + \norm{ g_2(X_1) } }_{\psi_2} 
    \Big\}
  \\
  &\leq
    \max\Big\{ 
      \sup_{g_1,g_2 \in G} d(g_1, g_2)
      \,,\,
      2 \sup_{g \in G} \bignorm{ \norm{ g(X_1) } }_{\psi_2} 
    \Big\}
  \\
  &=
    \max\Big\{ 
      \sup_{g_1,g_2 \in G} d(g_1, g_2)
      \,,\,
      \sup_{g \in G} \bignorm{ \norm{ g(X_1) - (-g(X_1)) } }_{\psi_2} 
    \Big\}
  \\
  &=
    \sup_{g_1, g_2 \in G} d(g_1, g_2)
  \\
  &\leq
    \gamma_2(G, d)
  \,.
\end{align*}
Substituting the preceding inequality and \eqref{eq:contraction-bound} into \eqref{eq:subadditive-bound} 
proves \eqref{eq:main-contraction-bound}.
\end{proof}

\section{Linear Transformations}
\label{sec:lineartransformations}
Let $\mathcal{A} \subset \Real^{p\times k}$
\begin{equation*}
  G = \{ g : g(x) = A^T x \text{ and } A \in \mathcal{A} \}
  \,.
\end{equation*}
Observe that
\begin{equation*}
  \frac{1}{n}
  \sum_{i=1}^n \norm{g(X_i)}_2^2 = \frac{1}{n} \sum_{i=1}^n \innerp{X_i X_i^T}{A A^T} \,.
\end{equation*}
To apply \Cref{thm:multivariate-mendelson} to this empirical process, we need to bound
\begin{equation}
  \label{eq:lt-diameter}
  \sup_{g \in G}
  \norm[\big]{ \norm{g(X_1)}_2 }_{\psi_1}
  =
  \sup_{A \in \mathcal{A}}
  \norm[\big]{ \norm{A^T X_1}_2 }_{\psi_1}
  \leq
  \sup_{A \in \mathcal{A}}
  \norm[\big]{ \norm{A^T X_1}_2 }_{\psi_2}
\end{equation}
and
\begin{equation}
  \label{eq:lt-metric}
  d(g_1, g_2)
  =
  \norm[\big]{
    \norm{g(X_1) - g_2(X_1)}_2
  }_{\psi_2}
  =
  \norm[\big]{
    \norm{(A_1 - A_2)^T X_1}_2
  }_{\psi_2}
  \,.
\end{equation}
The next lemma allows us to relate the $\psi_2$-norm on $G$ to the 
Frobenius norm on $\mathcal{A}$.
\begin{lemma}
  \label{lem:psi2-frobenius}
  Let $Z$ be a $p$-variate random vector and $A$ be a $p \times k$ matrix. 
  Then
  \begin{equation}
    \norm[\big]{ \norm{A^T Z}_2 }_{\psi_2} 
    \leq
    \norm{A}_F \norm{Z}_{\psi_2}\,,
  \end{equation}
  where $\norm{Z}_{\psi_2} \coloneqq \sup_{\norm{u}_2 \leq 1} \norm{\innerp{Z}{u}}_{\psi_2}$.
\end{lemma}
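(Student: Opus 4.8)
The plan is to reduce the claim to a coordinatewise sub-Gaussian estimate combined with a convexity argument. Write $a_1, \ldots, a_k \in \Real^p$ for the columns of $A$, so that $A^T Z$ has coordinates $\innerp{a_j}{Z}$ and $\norm{A^T Z}_2^2 = \sum_{j=1}^k \innerp{a_j}{Z}^2$. First I would record that each coordinate $Y_j \coloneqq \innerp{a_j}{Z}$ is sub-Gaussian: by positive homogeneity of the Orlicz norm and the definition of $\norm{Z}_{\psi_2}$ as a supremum over unit vectors, $\norm{Y_j}_{\psi_2} = \norm{a_j}_2 \, \norm[\big]{\innerp{a_j/\norm{a_j}_2}{Z}}_{\psi_2} \le \norm{a_j}_2 \, \norm{Z}_{\psi_2}$ (the bound being trivial when $a_j = 0$). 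Since $\sum_j \norm{a_j}_2^2 = \norm{A}_F^2$, everything then reduces to controlling the $\psi_2$-norm of $\norm{A^T Z}_2 = \bigl(\sum_j Y_j^2\bigr)^{1/2}$ by $\bigl(\sum_j \norm{Y_j}_{\psi_2}^2\bigr)^{1/2}$.

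I would isolate this last step as the following general fact, valid for any random variables $Y_1, \ldots, Y_k$ on a common probability space:
\[
  \norm[\big]{ \bigl( \textstyle\sum_{j=1}^k Y_j^2 \bigr)^{1/2} }_{\psi_2}^2
  \le
  \sum_{j=1}^k \norm{Y_j}_{\psi_2}^2
  \,.
\]
To prove it, set $c_j = \norm{Y_j}_{\psi_2}$ and $c^2 = \sum_j c_j^2$, so that $\E\bigl[\exp(Y_j^2 / c_j^2)\bigr] \le 2$ for each $j$ by the definition of the $\psi_2$-norm with $\psi_2(x) = e^{x^2} - 1$. Writing $\lambda_j = c_j^2 / c^2$, the $\lambda_j$ form a convex combination, and
\[
  \exp\Bigl( \tfrac{1}{c^2} \textstyle\sum_j Y_j^2 \Bigr)
  = \exp\Bigl( \textstyle\sum_j \lambda_j \tfrac{Y_j^2}{c_j^2} \Bigr)
  \le \textstyle\sum_j \lambda_j \exp\Bigl( \tfrac{Y_j^2}{c_j^2} \Bigr)
\]
by convexity of the exponential. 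Taking expectations gives $\E\bigl[\exp\bigl((\sum_j Y_j^2)/c^2\bigr)\bigr] \le \sum_j \lambda_j \cdot 2 = 2$, whence $\norm[\big]{(\sum_j Y_j^2)^{1/2}}_{\psi_2} \le c$, which is the claim. Substituting the coordinatewise bound $\norm{Y_j}_{\psi_2} \le \norm{a_j}_2 \norm{Z}_{\psi_2}$ then yields $\norm[\big]{\norm{A^T Z}_2}_{\psi_2}^2 \le \norm{A}_F^2 \norm{Z}_{\psi_2}^2$, and taking square roots finishes the proof.

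The step that looks hardest is that the coordinates $Y_j = \innerp{a_j}{Z}$ are in general strongly dependent, so no independence-based concentration is available; the extreme case $a_1 = \cdots = a_k$ shows the inequality is in fact tight with constant one. The Jensen step above is precisely what circumvents the dependence, and I would stress that it uses only the convexity of $u \mapsto e^u$, i.e.\ the fact that $\psi_2(x) = e^{x^2} - 1$ is a convex function of $x^2$; the same argument goes through for any $\psi_\alpha$ with $\alpha \ge 2$. The one routine technical point to verify is that the constraint $\E[\exp(Y_j^2/c_j^2)] \le 2$ holds at $c_j = \norm{Y_j}_{\psi_2}$ itself rather than only for slightly larger values, which follows from monotone convergence.
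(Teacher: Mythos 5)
Your proof is correct and takes essentially the same approach as the paper: both expand $\norm{A^T Z}_2^2 = \sum_{j} \innerp{a_j}{Z}^2$ over the columns of $A$, bound each coordinate via $\norm{\innerp{a_j}{Z}}_{\psi_2} \leq \norm{a_j}_2 \norm{Z}_{\psi_2}$, and then aggregate the squares. The paper performs the aggregation by converting to the $\psi_1$ norm through the identity $\norm{Y}_{\psi_2}^2 = \norm{Y^2}_{\psi_1}$ and invoking the triangle inequality for the $\psi_1$ norm, and your Jensen convex-combination argument is precisely the standard proof of that triangle inequality written out inline, so the two arguments coincide in substance.
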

\begin{proof}
  Let $a_1,\ldots,a_k$ denote the columns of $A$. Then
  \begin{equation*}
    \norm[\big]{
      \norm{A^T Z}_2^2
    }_{\psi_1}
    \leq
    \sum_{i=1}^k \norm[\big]{\abs{\innerp{a_i}{Z}}^2}_{\psi_1}
    =
    \sum_{i=1}^k \norm[\big]{\innerp{a_i}{Z}}_{\psi_2}^2
    \leq
    \sum_{i=1}^k \norm{a_i}_2^2 \norm{Z}_{\psi_2}^2
    \,.
  \end{equation*}
  Thus,
  \begin{equation*}
    \norm[\big]{ \norm{A^T Z}_2 }_{\psi_2}^2
    =
    \norm[\big]{ \norm{A^T Z}_2^2 }_{\psi_1}
    \leq
    \norm{A}_F^2 \norm{Z}_{\psi_2}^2
    \,.
    \qedhere
  \end{equation*}
\end{proof}
Applying \Cref{lem:psi2-frobenius} to \cref{eq:lt-diameter,eq:lt-metric} yields 
\begin{equation*}
    \sup_{g \in G}
    \norm[\big]{ \norm{g(X_1)}_2 }_{\psi_1}
  \leq
    \sup_{A \in \mathcal{A}}
    \norm{X_1}_{\psi_2} \norm{A}_F
\end{equation*}
and
\begin{equation*}
  d(g_1,g_2) \leq \norm{X_1}_{\psi_2} \norm{A_1 - A_2}_F \,.
\end{equation*}
Since $\norm{A}_F = \norm{\vectorize(A)}_2$, the majorizing measure theorem 
\parencite[Theorem 2.1.1]{Talagrand} implies that
\begin{equation*}
  \gamma_2(G,d) \leq c \norm{X_1}_{\psi_2} \E \sup_{A \in \mathcal{A}} \innerp{A}{\mathcal{Z}} \,,
\end{equation*}
where $\mathcal{Z}$ is a $p \times k$ matrix with i.i.d. standard Gaussian entries 
and $c > 0$ is an absolute constant.  Thus, we have the following corollary of
\Cref{thm:multivariate-mendelson}.
\begin{corollary}
  Let $X_1,\ldots,X_n \in \Real^p$ be i.i.d. mean $0$ random vectors, $\Sigma = \E X_1 X_1^T$, 
  and $\sigma = \sup_{\norm{u}_2 = 1} \norm{\innerp{X_1}{u}}_{\psi_2}$, and 
  \begin{equation*}
    S_n = \frac{1}{n} \sum_{i=1}^n X_i X_i^T \,.
  \end{equation*}
  If $\mathcal{A} \subseteq \Real^{p \times k}$ is symmetric, then there exist positive constants
  $c_1$, $c_2$, and $c_3$ such that for all $t \geq c_1$, with probability at least $1 - 2 \exp(-c_2 t^{2/5})$,
  \begin{equation*}
    \sup_{A \in \mathcal{A}}
    \abs{\innerp{S_n - \Sigma}{AA^T}}
    \leq
    c_3 t \bigg\{
      \frac{\sigma^2 \sup_{A \in \mathcal{A}} \norm{A}_F}{\sqrt{n}}
      \big(
        \E \sup_{A \in \mathcal{A}} \innerp{\mathcal{Z}}{A}
      \big)
      +
      \frac{\sigma^2}{n}
      \big(
        \E \sup_{A \in \mathcal{A}} \innerp{\mathcal{Z}}{A}
      \big)^2
    \bigg\}
    \,,
  \end{equation*}
  where $\mathcal{Z}$ is a $p \times k$ matrix with i.i.d. standard Gaussian 
  entries.  Furthermore, we have under the same condition, 
  that 
  \begin{equation*}
    \E \sup_{A \in \mathcal{A}}
    \abs{\innerp{S_n - \Sigma}{AA^T}}
    \leq
    c \bigg\{
      \frac{\sigma^2 \sup_{A \in \mathcal{A}} \norm{A}_F}{\sqrt{n}}
      \big(
        \E \sup_{A \in \mathcal{A}} \innerp{\mathcal{Z}}{A}
      \big)
      +
      \frac{\sigma^2}{n}
      \big(
        \E \sup_{A \in \mathcal{A}} \innerp{\mathcal{Z}}{A}
      \big)^2
    \bigg\}
    \,,
  \end{equation*}
  with a universal constant $c$.
\end{corollary}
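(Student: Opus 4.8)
The plan is to recognize the left-hand side as an instance of the squared-norm process of \Cref{thm:multivariate-mendelson} and then feed in the metric and complexity estimates already assembled above. Taking $G = \{g : g(x) = A^T x,\ A \in \mathcal{A}\}$, the symmetry of $\mathcal{A}$ forces $G$ to be symmetric, since $A \in \mathcal{A}$ implies $-A \in \mathcal{A}$ and $(-A)^T x = -A^T x$. Combining the identity $\frac{1}{n}\sum_{i=1}^n \norm{g(X_i)}_2^2 = \innerp{S_n}{AA^T}$ recorded at the start of this section with $\E\norm{g(X_1)}_2^2 = \innerp{\Sigma}{AA^T}$ shows that the centered process indexed by $G$ equals $\innerp{S_n - \Sigma}{AA^T}$, so the supremum in the corollary is exactly the one controlled by \Cref{thm:multivariate-mendelson}.

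First I would apply \Cref{thm:multivariate-mendelson} directly, which needs only the two complexity inputs, both already in hand. By \eqref{eq:lt-diameter} and \Cref{lem:psi2-frobenius}, $\sup_{g \in G}\norm[\big]{\norm{g(X_1)}_2}_{\psi_1} \leq \sigma \sup_{A \in \mathcal{A}}\norm{A}_F$; and by \eqref{eq:lt-metric}, \Cref{lem:psi2-frobenius}, the monotonicity of $\gamma_2$ under domination of the metric by $\sigma\norm{\cdot}_F$, and the majorizing measure theorem \parencite[Theorem 2.1.1]{Talagrand}, $\gamma_2(G, d) \leq c\,\sigma\, \E\sup_{A \in \mathcal{A}}\innerp{\mathcal{Z}}{A}$. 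Substituting these into the conclusion of \Cref{thm:multivariate-mendelson} reproduces the high-probability bound verbatim: the two $\sigma$ factors in the first term arise as $\sigma$ from the $\psi_1$ input times $\sigma$ from the chaining bound, and the $\sigma^2$ in the second term is the square of the chaining bound, with the constants $c$ and $c^2$ absorbed into the leading constant $c_3$.

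The only genuinely new step is the expectation bound. Writing $W = \sup_{A \in \mathcal{A}}\abs{\innerp{S_n - \Sigma}{AA^T}}$ and letting $M$ denote the quantity inside the braces, the first part reads $\prob(W > c_3 t M) \leq 2\exp(-c_2 t^{2/5})$ for $t \geq c_1$. I would then integrate the tail via $\E W = \int_0^\infty \prob(W > s)\,ds$, splitting at $s = c_3 c_1 M$. On $[0, c_3 c_1 M]$ the integrand is at most $1$, contributing $c_3 c_1 M$; on the remainder the substitution $s = c_3 t M$ gives $c_3 M \int_{c_1}^\infty 2\exp(-c_2 t^{2/5})\,dt$. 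The point to check is that this integral is a finite absolute constant, which follows from the substitution $u = t^{2/5}$ reducing it to the convergent Gamma-type integral $\frac{5}{2}\int u^{3/2}e^{-c_2 u}\,du$. Hence $\E W \leq c\,M$ for a universal constant $c$, giving the second assertion.

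The expected main obstacle is modest. Beyond correctly matching the two $\sigma$ factors and the powers of the Gaussian width $\E\sup_{A \in \mathcal{A}}\innerp{\mathcal{Z}}{A}$ in each term, the only care is the tail-integration argument, where one must verify that the stretched-exponential tail with exponent $2/5$ remains integrable so that the expectation bound holds with a constant free of $n$, $p$, $k$, and $\mathcal{A}$.
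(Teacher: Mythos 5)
Your proposal is correct and follows essentially the same route as the paper: the corollary is exactly \Cref{thm:multivariate-mendelson} applied to $G = \{g : g(x) = A^T x,\ A \in \mathcal{A}\}$, with the $\psi_1$-diameter and $\gamma_2(G,d)$ inputs bounded via \Cref{lem:psi2-frobenius} and the majorizing measure theorem, just as in the derivation preceding the corollary. For the expectation bound the paper offers no explicit argument, and your tail-integration step (splitting at $s = c_3 c_1 M$ and checking integrability of the stretched-exponential tail $\exp(-c_2 t^{2/5})$) is the standard and correct way to supply it.
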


\printbibliography

\end{document}